\newtheorem{theorem}{Theorem}[section]
\newtheorem{corollary}[theorem]{Corollary}
\newtheorem{lemma}[theorem]{Lemma}
\theoremstyle{definition}
\theoremstyle{remark}
\newtheorem{conjecture}[theorem]{Conjecture}
\newcommand\be{\begin{equation}}
\newcommand\ee{\end{equation}}
\numberwithin{equation}{section}
\newcommand{\QQ}{\mathbb {Q}}
\newcommand{\OO}{{\mathcal O}}
\newcommand{\edv}{\mathrel\Vert} 
\def\D{\Delta}
\def\sl2{\textbf{SL}(2,\mathbb{\OO})}
\def\psl2{\textbf{PSL}(2,\mathbb{\OO})}
\def\Psl2{\textbf{PSL}(2,\mathbb{\OO})}
\def\Pgl2{\textbf{PGL}(2,\mathbb{\OO})}
\title[Eisenstein Series and zeta functions of binary Hermitian forms]{Eisenstein Series whose Fourier coefficients are zeta functions of binary Hermitian forms}
\author{Jorge Fl\'orez}
\address{Department of Mathematics, Borough of Manhattan Community College, City University of New York, 199 Chambers Street, New York, NY 10007, USA}
\email{jflorez@bmcc.cuny.edu}
\author{Cihan Karabulut}
\address{Department of Mathematics, William Paterson University, New Jersey 07470, USA} \email{karabulutc@wpunj.edu}
\thanks{The second author was partly supported by Assigned Release Time (ART) program for research from William Paterson University}
\author{An Hoa Vu}
\address{Department of Mathematics, The Graduate Center, City University of New York, New York, NY 10016, USA}
\email{avu@gradcenter.cuny.edu}
\subjclass[2010]{11F30, 11M36 (primary), 32N10  (secondary)}
\keywords{Modular Forms, Zeta Functions of Binary Hermitian Forms}
\date{\today}
\begin{document}
	
\begin{abstract}
In this paper we investigate a result of Ueno on the modularity of generating series associated to the zeta functions of binary Hermitian forms previously studied by Elstrodt et al. We improve his result by showing that the generating series are Eisenstein series. As a consequence we obtain an explicit formula for the special values of zeta functions associated with binary Hermitian forms.
\end{abstract}
	
\maketitle

\tableofcontents

\section{Introduction}

In \cite{Cohen}, Cohen constructed modular forms whose Fourier coefficients are given by finite sums of Dirichlet $L$-series evaluated at integral arguments. As an application, he derived a number of formulas analogous to various classical class number relations discovered by Kronecker, Hurwitz, Selberg and Eichler. Zagier (\cite{Za1}) later  generalized the results of Cohen by considering infinite sums of zeta functions attached to binary quadratic forms evaluated at arbitrary complex arguments as Fourier coefficients. Similar to Zagier's construction, Ueno (\cite{U}) considered the generating series of zeta functions attached to binary Hermitian forms and showed that these infinite series are also modular forms on a congruence subgroup of the full modular group. He obtained his result using the theory of prehomogeneous vectors spaces and Weil's Converse Theorem. 

In this paper, we show that the modular forms constructed in \cite{U} are Eisenstein series and as a consequence we get a simple arithmetic expression for zeta functions attached to binary Hermitian forms. 

Let $K$ be an imaginary quadratic field with discriminant $D<0$ and let $\OO$ be its ring of integers. We let $\OO^*=\frac{i}{\sqrt{D}}\OO$ be the inverse different of $K$. For $\D, n \in \mathbb{Z} $, set
\begin{align}
r(\D,n)&:=\#\{\beta \in \OO/n\OO \ |\ \beta\bar{\beta}\equiv \D\ (\text{mod}\ n)\}\ \text{and}\\
r^{*}(\D,n)&:=\#\{\beta \in \OO^*/n\OO \ |\ |D|\beta\bar{\beta}\equiv \D\ (\text{mod}\ n|D|)\},
\end{align}
where $\#S$ denotes the cardinality of a set $S$. Then we define the following two zeta functions
\begin{align}
Z(\D,s)&:=\sum_{n=1}^{\infty}\frac{r(\D,n)}{n^{s+1}},\\
Z^*(\D,s)&:=\sum_{n=1}^{\infty}\frac{r^*(\D,n)}{n^{s+1}}.
\end{align}

The zeta function $Z(\D,s)$ is studied by Elstrodt, Grunewald and Mennicke (\cite{EGM2}) in connection with representation numbers of binary Hermitian forms with coefficients in $\OO$. They showed that 
\begin{equation}\label{EGM-zeta}
	Z(\D,s)= \begin{cases} 
		\zeta_{K}(s)L(\chi_{D},s+1)^{-1}&\text{if }  \D=0,\\
		\theta(-\D,s)\zeta_{\mathbb{Q}}(s)L(\chi_{D},s+1)^{-1} &\text{if }  \D\neq 0, \\
	\end{cases}
\end{equation}
where $\zeta_{K}(s)$ denotes the usual zeta function of $K$ and $\theta(\D,s)$ is a finite Euler product given by 
\begin{equation}\label{eulerpro}
	\theta(\D,s)=\prod_{p|D\D}R_p(\D,p^{-1-s})
\end{equation}
with
\begin{equation}
	R_p(\D,X)=\begin{cases}
		\frac{1-\left(\left(\frac{D}{p}\right)(pX)\right)^{t+1}}{1-\left(\frac{D}{p}\right)pX}&\text{for }  p\nmid D,\ p^t\edv \D,\\
		1+\left(\frac{-|D_0|^t\D_0}{p}\right)(pX)^{t+1}&\text{for }  p\mid D,\ p\neq 2,\ p^t\edv \D,\\
		1+\left(\frac{8}{\D_0D^t_2}\right)(2X)^{t+3}&\text{for } p=2,\  4\mid D,\ D_{1}\equiv 2(8),\ 2^t\edv \D,\\
		1-\left(\frac{-8}{\D_0D^t_2}\right)(2X)^{t+3}&\text{for } p=2,\  4\mid D,\ D_{1}\equiv 6(8),\ 2^t\edv \D,\\
		1-\left(\frac{-4}{\D_0D^t_2}\right)(2X)^{t+2}&\text{for } p=2,\  4\mid D,\ D_{1}\equiv 3\ \text{or}\ 7(8),\ 2^t\edv \D,\\
	\end{cases}
	\label{eq:defn_Rp}
\end{equation}
where $D_{0}:=D/p$, $\D_0:=p^{-t}\D$, and  where for $D\equiv0\pmod4$ 

$$D_{1}:=\frac{D}{4}, \quad \quad D_{2}:=\begin{cases}
-\frac{D_{1}}{2}&\text{if } D_{1}\equiv2\pmod4,\\
\frac{1-D_{1}}{2}&\text{if } D_{1}\equiv3\pmod4.
\end{cases}$$

We remark here that there is a minor mistake in Ueno's paper \cite{U} where his $Z(\Delta, s)$ is actually $Z(-\Delta, s)$ in \cite{EGM2}, Definition 2.1 and Equation (2.12).

Let $k$ be a positive integer. For $j=1,2$ Ueno defined the generating function of $Z(\D, s)$ and $Z^*(\D, s)$, respectively, as follows
\begin{align}
f_{k,D}^{j}(\tau)&:= \frac{(-1)^{k + 1}|D|^{\frac{1}{2}}\zeta(2k)\Gamma(2k+1)}{(2\pi)^{2k+1}}+\sum_{\D\geq1}(-1)^j\D^{2k}Z((-1)^{j-1}\D,2k)q^{\D},\\
g_{k,D}^{j}(\tau)&:= \frac{(-1)^{j + k+ 1}\,|D|^{\frac{1}{2}+2k}\zeta(2k)\Gamma(2k+1)}{(2\pi)^{2k+1}}+\sum_{\D\geq1} \D^{2k} Z^*((-1)^{j-1}\D,2k)q^{\D},
\end{align}
where $q := e^{2\pi i \tau}$ and $\tau \in \mathbb{H}:=\{z\in \mathbb{ C}\,|\,\Im(z)>0\}$ (for simplicity, we have normalized Ueno's $g_{k,D}^j$ by $i |D|^{-k}$). Ueno \cite[Theorem 4.4]{U} shows that $f_{k,D}^{j}$ and $g_{k,D}^{j}$ belong to $M_{2k+1}(\Gamma_0(|D|),\chi_{D})$ and satisfy 

\begin{equation}\label{frickef&g}
g_{k,D}^{j}(\tau)=(|D|\tau)^{-(2k+1)}f_{k,D}^{j}\left(\frac{-1}{|D|\tau}\right),
\end{equation}
 where $M_{2k+1}(\Gamma_0(|D|),\chi_{D})$ is the space of modular forms of weight $2k+1$ and character $\chi_{D}:=\left(\frac{D}{\cdot}\right)$ on  $\Gamma_0(|D|)$.

In this paper, we refine Ueno's result, which was obtained using Weil's Converse Theorem, by showing that the modular form $f_{k,D}^{j}$ and $g_{k,D}^{j}$ are in fact Eisenstein series (see \thref{thm:UenoFormIsEisensteinSeries}).

This paper is organized as follows. In section 2, we state the main results precisely. In section 3, we introduce the normalized versions of $f_{k,D}^{j}$ and $g_{k,D}^{j}$. In section 4, we give the proof of the main results of this paper.

\section{The Main Results}
We briefly recall some definitions and key facts about modular forms on the congruence subgroup $\Gamma_0(N)$ as discussed in \cite{CohSt}.
Let $M_{k}(\Gamma_0(N),\chi)$ be the space holomorphic modular forms on $\Gamma_0(N)$ of weight $k$ and character $\chi$.
We have a direct sum decomposition
\begin{equation}
M_{k}(\Gamma_0(N),\chi) = S_k(\Gamma_0(N),\chi) \oplus \mathcal{E}_k(\Gamma_0(N),\chi)
\end{equation}
of $\mathbb{ C}$-vector spaces, where $S_k(\Gamma_0(N),\chi)$ denotes the subspace of cusp forms and $\mathcal{E}_k(\Gamma_0(N),\chi)$ denotes its orthogonal complement with respect to the Petersson inner product. 

Let us now define the Eisenstein series that we work with in this paper. Let $\chi_1$ and $\chi_2$ be two Dirichlet characters modulo $N_1$ and $N_2$ respectively, set $N = N_1 N_2$ and view $\chi = \chi_1\chi_2$ as a character modulo $N$, and let $k\geq3$ be an integer. Then define 
\begin{equation}\label{eis}
G_k(\chi_{1},\chi_{2})(\tau):=\frac{1}{2}\sideset{}{'}\sum_{N_1|c}\frac{\overline{\chi_{1}(d)}\chi_{2}(c/N_1)}{(c\tau+d)^k},
\end{equation} 
where the sum is over all pairs $(c,d) \in \mathbb{Z}\times\mathbb{Z}$ with $(c,d)\neq 0,$ with the additional condition $N_1|c$. If $\chi_1$ is primitive then the series $G_k(\chi_{1},\chi_{2})$ belongs to the space $M_{k}(\Gamma_0(N),\chi)$ and has the Fourier expansion (see \cite[ Corollary 8.5.5]{CohSt})
\begin{equation}
G_k(\chi_{1},\chi_{2})(\tau)=\delta_{N_2,1}L(\overline{\chi_{1}},k)+\left(\frac{-2\pi i}{N_1}\right)^k\frac{\mathfrak{g}(\overline{\chi_{1}})}{(k-1)!}\sum_{n\geq 1}\sigma_{k-1}(\chi_{1},\chi_2;n)q^{n},
\end{equation}
where $\delta_{i,j}$ is the Kronecker delta function, $\mathfrak{g}(\chi_{1})$ is the Gauss sum corresponding to the character $\chi_1$ and $\sigma_t(\chi_1,\chi_{2};n)$ is the twisted divisor sum defined as  
\begin{equation}
\sigma_t(\chi_1,\chi_{2};n):=\sum_{d|n,\,d>0}\chi_1(d)\chi_2(n/d)d^t.
\end{equation}
If $\chi_2$ is trivial, we sometimes write $\sigma_t(\chi_1; n)$.

Similar to the case of the full modular group the series, $G_k(\chi_{1},\chi_{2})$ is normalized to get 
\begin{align}
E_{k}(\chi_1,\chi_2)(\tau)&:=\left(\frac{N_1}{-2\pi i}\right)^k\frac{(k-1)!}{\mathfrak{g}(\overline{\chi_{1}})}G_k(\chi_{1},\chi_{2})(\tau) \notag\\
&=\delta_{N_2,1}\frac{L(\chi_1,1-k)}{2}+ \sum_{n\geq1} \sigma_{k-1}(\chi_{1},\chi_2;n)q^{n}\label{eis-normalized}.
\end{align}
As $\chi_1$ ranges through all the primitive characters the Eisenstein series $E_{k}(\chi_1,\chi_2)$ (or equivalently, the non-normalized series $G_{k}(\chi_1,\chi_2)$)  form a basis of the subspace $\mathcal{E}_k(\Gamma_0(N),\chi)$ (see \cite[ Theorem 8.5.17]{CohSt}). 
  
 Finally, we recall the \textit{Fricke involution}  $W_N:=\begin{psmallmatrix}
 0&-1\\
 N&1
 \end{psmallmatrix}$ on a modular form $f\in M_{k}(\Gamma_0(N),\chi)$ as 
 \begin{eqnarray}
 (f|_kW_N)(\tau):=N^{-\frac{k}{2}}\tau^{-k}f\left(\frac{-1}{N\tau}\right),
 \end{eqnarray} where $|_k$ is the usual \textit{slash operator} of weight $k$ acting on the space $M_{k}(\Gamma_0(N),\chi)$. The Fricke involution of $G_{k}(\chi_1,\chi_2)$ is given by (see \cite[Proposition 8.5.3]{CohSt})
\begin{equation}\label{frickeG}
G_{k}(\chi_1,\chi_2)|_kW_N(\tau)=\chi_{2}(-1)\left(\frac{N_2}{N_1}\right)^{\frac{k}{2}}G_{k}(\overline{\chi_2},\overline{\chi_1})(\tau),
\end{equation}
which also implies that 
\begin{equation}\label{frickeE}
E_{k}(\chi_1,\chi_2)|_kW_N(\tau)=\chi_{2}(-1)\left(\frac{N_2}{N_1}\right)^{\frac{k}{2}}E_{k}(\overline{\chi_2},\overline{\chi_1})(\tau).
\end{equation}

\newcommand{\FundDisc}{\mathfrak{D}}

We are now ready to state the main result of this paper. Let $\FundDisc \subset \mathbb{Z}$ denote the subset of all fundamental discriminants and set 
\[
F_D := \left\{(D_1, D_2) \in \FundDisc^2 \;|\; D_1 D_2=D \text{ and } \gcd(D_1, D_2) = 1 \right\}.
\]

\begin{theorem}
Let $D < 0$ be the discriminant of $K$ and let $\chi_{D}$ be the quadratic character associated to $K$ then for all $j \in \{0, 1\}$, $k \in \mathbb{Z}_{\geq 1}$ and $\Delta \geq 1$, one has
\begin{equation}\label{eq:theta_vs_rhs}
f_{k, D}^{j}(\tau) = C_{k,D} \sum_{(D_1,D_2) \in F_D} |D_2|^{2k} \chi_{D_2}((-1)^{j-1}) E_{2k+1}\left(\chi_{D_1},\chi_{D_2}\right)(\tau),
\end{equation}
\begin{equation}\label{g_kD}
g_{k, D}^{j}(\tau) = C_{k,D} \sum_{(D_1,D_2) \in F_D} \frac{ |D_2|^{2k} \chi_{D_2}((-1)^{j})}{|D_1|^{2k+1}} E_{2k+1}\left(\chi_{D_2},\chi_{D_1}\right)(\tau),
\end{equation}
where
\begin{equation}\label{C_kd}
C_{k, D}:= \frac{2\,(-1)^{k+1}|D|^{\frac{1}{2}}\zeta(2k)\Gamma(2k+1)}
{(2\pi)^{2k+1}L(\chi_D,-2k)}.
\end{equation}
In particular, $f_{k, D}^{j}(\tau)$ and  $g_{k, D}^{j}(\tau)$ belong to $\mathcal{E}_{2k+1}(\Gamma_0(|D|),\chi_D)$.
	\thlabel{thm:UenoFormIsEisensteinSeries}
\end{theorem}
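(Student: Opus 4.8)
The plan is to prove \eqref{eq:theta_vs_rhs} by matching $q$-expansions and then to deduce \eqref{g_kD} from it via the Fricke involution. The right-hand side of \eqref{eq:theta_vs_rhs} is manifestly a $\mathbb{C}$-linear combination of the Eisenstein series $E_{2k+1}(\chi_{D_1},\chi_{D_2})$; since each $D_1$ is a fundamental discriminant, $\chi_{D_1}$ is primitive modulo $|D_1|$, and as $(D_1,D_2)$ ranges over $F_D$ we have $|D_1|\,|D_2|=|D|$ and $\chi_{D_1}\chi_{D_2}=\chi_{D_1D_2}=\chi_D$, so every summand lies in $\mathcal{E}_{2k+1}(\Gamma_0(|D|),\chi_D)$ and hence so does the whole right-hand side. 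By Ueno's theorem $f_{k,D}^{j}\in M_{2k+1}(\Gamma_0(|D|),\chi_D)$, so \eqref{eq:theta_vs_rhs}, once proved, immediately yields the final assertion $f_{k,D}^{j}\in\mathcal{E}_{2k+1}(\Gamma_0(|D|),\chi_D)$. It therefore suffices to check that the two sides have the same constant term and the same $q^{\D}$-coefficient for every $\D\ge1$.

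For the constant term, the Fourier expansion \eqref{eis-normalized} shows that $E_{2k+1}(\chi_{D_1},\chi_{D_2})$ has a nonzero constant term only when $|D_2|=1$, i.e.\ $D_2=1$ and $D_1=D$; there $\chi_{D_2}$ is trivial and the constant term equals $\tfrac12 L(\chi_D,-2k)$. Hence the constant term of the right-hand side is $C_{k,D}\cdot\tfrac12 L(\chi_D,-2k)$, which by \eqref{C_kd} equals $\tfrac{(-1)^{k+1}|D|^{1/2}\zeta(2k)\Gamma(2k+1)}{(2\pi)^{2k+1}}$, matching that of $f_{k,D}^{j}$. For $\D\ge1$, reading off the $q^{\D}$-coefficient via \eqref{eis-normalized} reduces \eqref{eq:theta_vs_rhs} to the arithmetic identity
\begin{equation}\label{plan:core}
(-1)^j\D^{2k}Z((-1)^{j-1}\D,2k)=C_{k,D}\sum_{(D_1,D_2)\in F_D}|D_2|^{2k}\chi_{D_2}((-1)^{j-1})\,\sigma_{2k}(\chi_{D_1},\chi_{D_2};\D).
\end{equation}

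To prove \eqref{plan:core} I would first clear the analytic factors. Substituting \eqref{EGM-zeta} gives $Z((-1)^{j-1}\D,2k)=\theta((-1)^j\D,2k)\,\zeta(2k)L(\chi_D,2k+1)^{-1}$, and then the functional equation of the Dirichlet $L$-function $L(\chi_D,\cdot)$, together with the value $\mathfrak{g}(\chi_D)=i\sqrt{|D|}$ of the Gauss sum for the odd real character $\chi_D$ (with $D<0$), evaluates the ratio $L(\chi_D,2k+1)/L(\chi_D,-2k)$; combined with the reflection/duplication values of $\Gamma$ at half-integers this collapses the constant $C_{k,D}\,\zeta(2k)L(\chi_D,2k+1)^{-1}L(\chi_D,-2k)$ (that is, all the $\pi$, $\Gamma$ and $|D|^{1/2}$ factors) to an elementary quantity, reducing \eqref{plan:core} to a purely multiplicative identity between the finite Euler product $\theta((-1)^j\D,2k)$ of \eqref{eulerpro}--\eqref{eq:defn_Rp} and the weighted sum of twisted divisor sums. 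The key structural input is that a fundamental discriminant factors uniquely into coprime prime discriminants, so the factorizations $D=D_1D_2$ in $F_D$ are in bijection with partitions of the set of prime-discriminant factors of $D$; consequently the sum over $F_D$ splits as a product over the primes $p\mid D$, and since $\sigma_{2k}(\chi_{D_1},\chi_{D_2};\cdot)$ is multiplicative in $\D$, both sides of the reduced identity factor as Euler products over the primes dividing $D\D$. The identity then follows by comparing local factors prime by prime against the branches of $R_p$ in \eqref{eq:defn_Rp}.

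I expect this local comparison to be the main obstacle: it is a case analysis according to whether $p\nmid D$, $p\mid D$ is odd, or $p=2$, and according to the exact power $p^{t}\,\|\,\D$, and it is precisely at the ramified primes and in the several subcases for $p=2$ (as reflected by the five branches of \eqref{eq:defn_Rp}) that the local Euler factors of the twisted divisor sum and of $\theta$ must be shown to agree, including the bookkeeping of the characters $\chi_{D_1},\chi_{D_2}$ and the sign $\chi_{D_2}((-1)^{j-1})$. Finally, \eqref{g_kD} follows from \eqref{eq:theta_vs_rhs} without a separate coefficient computation: the relation $g_{k,D}^{j}(\tau)=(|D|\tau)^{-(2k+1)}f_{k,D}^{j}(-1/(|D|\tau))$ identifies $g_{k,D}^{j}$ with $|D|^{-(2k+1)/2}\,(f_{k,D}^{j}|_{2k+1}W_{|D|})$; applying the slash operator to \eqref{eq:theta_vs_rhs} and using the Eisenstein transformation law \eqref{frickeE} together with $\overline{\chi_{D_i}}=\chi_{D_i}$ swaps $(\chi_{D_1},\chi_{D_2})\mapsto(\chi_{D_2},\chi_{D_1})$, turns $\chi_{D_2}((-1)^{j-1})$ into $\chi_{D_2}((-1)^{j})$, and produces the factor $|D_1|^{-(2k+1)}$, reproducing \eqref{g_kD} exactly.
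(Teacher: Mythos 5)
Your outer scaffolding is sound and largely coincides with the paper's: the constant-term match (only $(D_1,D_2)=(D,1)$ contributes, giving $C_{k,D}\cdot\tfrac12 L(\chi_D,-2k)$, which equals the constant term of $f^j_{k,D}$), the reduction of the first identity to the coefficient identity \eqref{eq:FourierCoeffUenoAsDivisorSum}, the clearing of transcendental factors via the functional equation of $L(\chi_D,s)$ with $\mathfrak{g}(\chi_D)=i\sqrt{|D|}$ (the paper does exactly this in its Section 3, obtaining $C_{k,D}=-\zeta(2k)/(|D|^{2k}L(\chi_D,2k+1))$), the bijection between $F_D$ and subsets of the prime discriminants dividing $D$, and the Fricke-involution derivation of \eqref{g_kD} from \eqref{frickef&g} and \eqref{frickeE} (your bookkeeping producing $|D_1|^{-(2k+1)}$ and $\chi_{D_2}((-1)^j)$ is exactly the paper's closing computation). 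The final containment in $\mathcal{E}_{2k+1}(\Gamma_0(|D|),\chi_D)$ also follows as you say.

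However, there is a genuine gap at the crux. You propose to verify the divisor-sum identity for \emph{every} $\Delta\geq 1$ by comparing Euler factors against all five branches of \eqref{eq:defn_Rp}, and you yourself flag this case analysis --- the ramified odd primes with $t=v_p(\Delta)\geq 1$, the three $p=2$ branches, and the bookkeeping of $\chi_{D_1},\chi_{D_2}$ --- as ``the main obstacle'' without carrying it out; since everything else in your plan is routine, this unexecuted comparison is the entire content of the theorem. The paper's key idea, which your proposal is missing, is that this heavy analysis can be avoided altogether: because $f^j_{k,D}$ is already known (Ueno) to lie in $M_{2k+1}(\Gamma_0(|D|),\chi_D)$ and the right-hand side visibly does too, Miyake's Theorem 4.6.8 guarantees that the two forms are equal as soon as their $n$-th Fourier coefficients agree for all $n$ coprime to a fixed integer $L$; taking $L=2D$, one only needs the identity for $(\Delta,2D)=1$. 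In that range $t=0$ at every $p\mid 2D$, all branches of \eqref{eq:defn_Rp} collapse to the single clean local statement $R_p(\Delta,p^{-1-s})=1+\mathsf{sgn}(p^*)\,\chi_{p^*}(\Delta)\,|p^*|^{-s}$ (a short quadratic-reciprocity check), the factor $\theta_1$ over $p\mid\Delta$ becomes $\sigma_{-s}(\chi_D;\Delta)$, and the sum over $F_D$ factors as $\prod_{p\mid D}\bigl(1+\chi_{p^*}((-1)^{j-1}\Delta)\,|p^*|^{2k}\bigr)$, finishing the proof in a few lines. Your route is not wrong in principle --- the identity does hold for all $\Delta$, as the paper's corollary shows a posteriori --- but the deferred local computations (e.g.\ matching $1+\left(\frac{-|D_0|^t\Delta_0}{p}\right)(pX)^{t+1}$ at odd ramified $p$, the $p=2$ subcases with $t\geq 1$, and the failure of $\chi_{D_2}(\Delta/d)$ to split off $\chi_{D_2}(\Delta)$ when $(\Delta,D_2)>1$) are precisely what remains unproved in your write-up, so as it stands the proposal is an outline rather than a proof.
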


As a consequence of this theorem we immediately get the following simple formulas for the special values of $Z(\D,s)$ and $Z^*(\D,s)$ at positive even integers.

\begin{corollary}
For all $j \in \{0, 1\}$, $k\in \mathbb{Z}_{\geq 1}$ and $\Delta \geq 1$, we have
\begin{equation}
    (-1)^j \D^{2k} Z((-1)^{j-1}\D,2k)= C_{k, D} \sum_{(D_1,D_2) \in F_D}  |D_2|^{2k}\chi_{D_2}((-1)^{j-1}) \sigma_{2k}\left(\chi_{D_1},\chi_{D_2};\D\right),
    \label{eq:FourierCoeffUenoAsDivisorSum}
\end{equation}
\begin{equation}
\D^{2k} Z^*((-1)^{j-1}\D,2k)= C_{k, D} \sum_{(D_1,D_2) \in F_D} \frac{ |D_2|^{2k} \chi_{D_2}((-1)^{j})}{|D_1|^{2k+1}} \sigma_{2k}\left(\chi_{D_2},\chi_{D_1};\D\right).
\label{eq:FourierCoeffUenoAsDivisorSum-star}
\end{equation}

\end{corollary}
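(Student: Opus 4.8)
The plan is to prove the main theorem \thref{thm:UenoFormIsEisensteinSeries}; the Corollary is then immediate, being exactly the comparison of the $q^\Delta$-coefficients ($\Delta\ge1$) of the two sides of \eqref{eq:theta_vs_rhs} and \eqref{g_kD} via the Fourier expansion \eqref{eis-normalized}. I would first reduce the second identity \eqref{g_kD} to the first one \eqref{eq:theta_vs_rhs}. Since the Kronecker symbol attached to a fundamental discriminant $D_1$ is a primitive character modulo $|D_1|$, each $E_{2k+1}(\chi_{D_1},\chi_{D_2})$ with $(D_1,D_2)\in F_D$ lies in $\mathcal{E}_{2k+1}(\Gamma_0(|D|),\chi_D)$ (here $N_1=|D_1|$, $N_2=|D_2|$, $N=|D|$, $\chi_{D_1}\chi_{D_2}=\chi_D$), so the right-hand side of \eqref{eq:theta_vs_rhs} is a genuine Eisenstein series of the correct weight, level and character. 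Rewriting \eqref{frickef&g} as $g_{k,D}^{j}=|D|^{-(2k+1)/2}\big(f_{k,D}^{j}\big|_{2k+1}W_{|D|}\big)$ and applying the transformation law \eqref{frickeE} to the right-hand side of \eqref{eq:theta_vs_rhs} (using $\overline{\chi_{D_i}}=\chi_{D_i}$ and $\chi_{D_2}(-1)=\operatorname{sgn}(D_2)$), the half-integer powers of $|D_1|,|D_2|$ cancel, the weight $|D_2|^{2k}$ is carried to $|D_2|^{2k}|D_1|^{-(2k+1)}$, the sign $\chi_{D_2}(-1)$ sends $(-1)^{j-1}$ to $(-1)^{j}$, and $E(\chi_{D_1},\chi_{D_2})$ is sent to $E(\chi_{D_2},\chi_{D_1})$, turning \eqref{eq:theta_vs_rhs} into \eqref{g_kD} with the same constant $C_{k,D}$.

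It thus remains to prove \eqref{eq:theta_vs_rhs}. Both sides are holomorphic modular forms in $M_{2k+1}(\Gamma_0(|D|),\chi_D)$ (the left by \cite[Theorem 4.4]{U}, the right by the previous paragraph), so the identity is equivalent to the equality of all their Fourier coefficients; establishing it therefore also yields the ``in particular'' assertion that $f_{k,D}^{j}\in\mathcal{E}_{2k+1}$. The constant terms match by design: by \eqref{eis-normalized} the only summand with nonzero constant term is $(D_1,D_2)=(D,1)$, contributing $C_{k,D}\,\tfrac12 L(\chi_D,-2k)$, and $C_{k,D}$ in \eqref{C_kd} is defined precisely so that this equals the constant term $(-1)^{k+1}|D|^{1/2}\zeta(2k)\Gamma(2k+1)(2\pi)^{-(2k+1)}$ of $f_{k,D}^{j}$. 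This fixes $C_{k,D}$ once and for all.

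The heart of the matter is the coefficient identity \eqref{eq:FourierCoeffUenoAsDivisorSum} for $\Delta\ge1$, which I would package as an identity of Dirichlet series in an auxiliary variable $s$. Using the standard factorization $\sum_{\Delta\ge1}\sigma_{2k}(\chi_1,\chi_2;\Delta)\Delta^{-s}=L(\chi_1,s-2k)L(\chi_2,s)$, the right-hand side becomes
\[
C_{k,D}\sum_{(D_1,D_2)\in F_D}|D_2|^{2k}\chi_{D_2}((-1)^{j-1})\,L(\chi_{D_1},s-2k)\,L(\chi_{D_2},s).
\]
On the left, \eqref{EGM-zeta} gives $Z((-1)^{j-1}\Delta,2k)=\theta((-1)^{j}\Delta,2k)\,\zeta(2k)\,L(\chi_D,2k+1)^{-1}$, so up to the global factor $(-1)^j\zeta(2k)L(\chi_D,2k+1)^{-1}$ the left-hand series is $\sum_{\Delta\ge1}\theta((-1)^{j}\Delta,2k)\,\Delta^{2k-s}$, which I would evaluate through the Euler product \eqref{eulerpro}--\eqref{eq:defn_Rp} of $\theta$. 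At an unramified prime $\ell\nmid D$ the factor $R_\ell$ depends only on $v_\ell(\Delta)$ and $\chi_D(\ell)$, and a short geometric-series computation collapses the local sum to $\big((1-\ell^{2k-s})(1-\chi_D(\ell)\ell^{-s})\big)^{-1}$, i.e.\ the $\ell$-factor of $\zeta(s-2k)L(\chi_D,s)$. At each ramified prime $p\mid D$ the factor $R_p$ is a two-term expression whose nontrivial term carries a Kronecker symbol in the prime-to-$p$ part of $\Delta$; expanding the product over $p\mid D$ amounts to choosing, for each $p$, whether the prime discriminant at $p$ is assigned to $D_1$ or to $D_2$, and resumming over $\Delta$ converts these Kronecker-symbol twists into the pair $L(\chi_{D_1},s-2k)L(\chi_{D_2},s)$ with the weights $|D_2|^{2k}\chi_{D_2}((-1)^{j-1})$. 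Matching the remaining $s$-independent constant with $C_{k,D}$ uses the functional equation of $L(\chi_D,\cdot)$ to pass between $L(\chi_D,2k+1)$ and $L(\chi_D,-2k)$, consistent with the normalization already forced by the constant term.

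The main obstacle is precisely this ramified local computation: verifying that expanding each $R_p$ in \eqref{eq:defn_Rp} and resumming over $\Delta$ reproduces exactly the Kronecker symbols, powers of $|D_2|$, and signs occurring in the factorization sum. The odd primes $p\mid D$ are routine, but $p=2$ forces the split into the congruence cases $D_1\equiv2,6\pmod8$ and $D_1\equiv3,7\pmod8$ recorded in \eqref{eq:defn_Rp}, where the shifted exponents $(2X)^{t+3}$ and $(2X)^{t+2}$ and the accompanying sign conventions must be tracked carefully; this is where I expect the real labor to lie. Throughout, the fact that the constant-term comparison already pins down $C_{k,D}$ serves as an internal consistency check, since any error in the ramified bookkeeping would show up as a failure to reproduce \eqref{C_kd}.
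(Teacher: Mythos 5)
Your proposal is correct in outline and your treatment of the easy parts (constant-term normalization of $C_{k,D}$, and the reduction of \eqref{eq:FourierCoeffUenoAsDivisorSum-star} to \eqref{eq:FourierCoeffUenoAsDivisorSum} via the Fricke involution and \eqref{frickeE}) coincides with the paper's, but at the crucial step you take a genuinely different route. You propose to verify the coefficient identity for \emph{every} $\Delta \geq 1$ by packaging both sides as Dirichlet series in an auxiliary variable $s$ and carrying out the local analysis at all primes, including the ramified primes $p \mid D$ with $v_p(\Delta) > 0$ and in particular the four congruence cases at $p=2$ in \eqref{eq:defn_Rp}. The paper never performs that computation: it invokes \cite[Theorem 4.6.8]{Miyake}, by which two forms in $M_{2k+1}(\Gamma_0(|D|),\chi_D)$ coincide as soon as their $n$-th coefficients agree for all $n$ coprime to a fixed integer, so it suffices to check the identity for $(\Delta, 2D)=1$; there $t = v_p(\Delta) = 0$ at every $p \mid D$, each ramified factor collapses to the clean two-term expression $R_p(\Delta, p^{-1-s}) = 1 + \mathsf{sgn}(p^*)\,\chi_{p^*}(\Delta)\,|p^*|^{-s}$, and the identity follows from the bijection between $F_D$ and subsets of the prime divisors of $D$ via prime discriminants --- which is exactly your ``assign each prime discriminant to $D_1$ or to $D_2$'' expansion, restricted to the trivial valuations. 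Equality of the two modular forms then yields the coefficients at $\Delta$ not coprime to $2D$ for free. The trade-off: your route, if completed, is more self-contained --- matching \emph{all} coefficients directly would even render Ueno's modularity of $f_{k,D}^j$ unnecessary, whereas the paper's rigidity step requires it --- but the paper's route entirely avoids the $p=2$ bookkeeping with the shifted exponents $(2X)^{t+3}$, $(2X)^{t+2}$ that you correctly identify as the real labor; your proposal flags that labor without completing it, and it is precisely the work the paper's method shows one never has to do. One technical caution should you execute your plan: the left-hand series $\sum_{\Delta \geq 1} \theta((-1)^j\Delta, 2k)\,\Delta^{2k-s}$ is \emph{not} an honest Euler product, since the ramified factors in \eqref{eq:defn_Rp} depend through Kronecker symbols on the residue class of $\Delta_0$, the prime-to-$p$ part of $\Delta$, entangling the local places; one must expand the two-term factors at $p \mid D$ first and only afterwards resum the coprime-to-$D$ part against the resulting character twists. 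Your closing description (``resumming over $\Delta$ converts these Kronecker-symbol twists into the pair $L(\chi_{D_1}, s-2k)L(\chi_{D_2}, s)$'') has this right, but your preceding unramified computation, presented as a standalone $\ell$-factor of $\zeta(s-2k)L(\chi_D,s)$, is meaningful only inside that expansion, not as a factorization of the full series.
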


\section{Normalized Ueno's modular forms}

Instead of $f_{k, D}^{j}$, we shall work with the normalized version
\[
F_{k, D}^{j}(\tau) := \frac{L(\chi_D,2k+1)}{\zeta(2k)} f_{k, D}^{j}(\tau)
\]
that does not include the transcendental factors (i.e. those involving values of zeta function and $L$-function).

On the one hand, one has by \eqref{EGM-zeta}
\[
F_{k, D}^{j}(z) = \frac{(-1)^{k + 1}|D|^{\frac{1}{2}}\Gamma(2k+1)L(\chi_D,2k+1)}{(2\pi)^{2k+1}}
 +\sum_{\Delta\geq 1} (-1)^j
        \Delta^{2k}\theta((-1)^j \Delta,2k) q^{\Delta}.
\]

On the other hand, the functional equation of $L(\chi_D, s)$ (see \cite[Page 30]{Washington}) applied at $s = 2k+1$ gives
\[
\Gamma(2k+1)(-1)^kL(\chi_D,2k+1) = \frac{\mathfrak{g}(\chi_D)}{2i}\left(\frac{2\pi}{\mathfrak{f}(\chi_D)}\right)^{2k+1}L(\chi_D,-2k).
\]
So multiplying both sides by $\sqrt{|D|}$, and using the fact that $\mathfrak{g}(\chi_D)=i\sqrt{|D|}$, for $D < 0$, and $\mathfrak{f}(\chi_D)=|D|$, we obtain
\[
\frac{(-1)^k \sqrt{|D|}\Gamma(2k+1)L(\chi_D,2k+1)}{(2\pi)^{2k+1}}=\frac{L(\chi_D,-2k)}{2|D|^{2k}},
\]
or equivalently, after multiplying both sides by $\zeta(2k)$,
\[
\frac{(-1)^k\sqrt{|D|}\zeta(2k)\Gamma(2k+1)}{(2\pi)^{2k+1}L(\chi_D,-2k)}=\frac{\zeta(2k)}{2|D|^{2k}L(\chi_D,2k+1)}.
\]
Therefore,  the constant $C_{k, D}$ from the previous section can be described as
\[
C_{k, D} = \frac{-\zeta(2k)}{|D|^{2k}L(\chi_D,2k+1)}.
\]

In this normalized presentation, the equation \eqref{eq:FourierCoeffUenoAsDivisorSum} that we want to prove is equivalent to
\begin{equation}
    (-1)^j\Delta^{2k}\theta((-1)^j \Delta,2k)
=  \frac{-1}{|D|^{2k}}\sum_{(D_1,D_2) \in F_D}  |D_2|^{2k}\chi_{D_2}((-1)^{j-1})\sigma_{2k}\left(\chi_{D_1},\chi_{D_2};\D\right)
    \label{eq:theta_vs_rhs}
\end{equation}
for $\Delta \geq 1$ and $k \geq 1$, and \thref{thm:UenoFormIsEisensteinSeries} is equivalent to the identity
\begin{equation}
F_{k, D}^{j}(z) = -\frac{1}{|D|^{2k}}
      \sum_{(D_1,D_2) \in F_D}
         |D_2|^{2k} \chi_{D_2}((-1)^{j-1}) E_{2k+1}(\chi_{D_1},\chi_{D_2}).
      \label{eq:NormalizedUenoAsEisenstein}
\end{equation}

\section{Proof of \thref{thm:UenoFormIsEisensteinSeries} }

According to Theorem 4.6.8 in \cite{Miyake}, two modular forms $g_1, g_2 \in M_k(|D|, \chi_D)$ equal if for some integer $L$, their $n$-th Fourier coefficients $a_n(g_1)$ and $a_n(g_2)$ match for every natural number $n$ such that $(n, L) = 1$.
Thus, to show our identity between the two modular forms \eqref{eq:NormalizedUenoAsEisenstein}, it suffices to prove \eqref{eq:theta_vs_rhs} for every $\Delta > 0$ such that $(\Delta, 2D) = 1$.
To analyze the left hand side, let us denote
\begin{align*}
\theta_0(\Delta, s) &:= \prod_{p|D} R_p(\D,p^{-1-s}),\ \text{and}\\ 
\theta_1(\Delta, s) &:= \prod_{p|\D} R_p(\D,p^{-1-s})
= \prod_{p|\D} \frac{1-\left(\left(\frac{D}{p}\right)(p^{-s})\right)^{v_p(\D)+1}}{1-\left(\frac{D}{p}\right)p^{-s}}.
\end{align*}
We have
\[
\theta(\Delta, s) = \theta_0(\Delta, s) \theta_1(\Delta, s)
\]
under the assumption $(\Delta, D) = 1$. Observe that
\begin{equation}\label{theta1}
\theta_1(\Delta, s) = \sigma_{-s}(\chi_D; \Delta),
\end{equation}
where $\sigma_{-s}(\chi_D; \Delta) = \sum_{d | \Delta} \chi_D(d) d^{-s}$ is a twisted divisor sum, which follows from 
\[
\theta_1(p^t, s) = \sum_{\ell = 0}^{t} \left(\chi_D(p) p^{-s})\right)^\ell = \sum_{\ell = 0}^{t} \chi_D(p^\ell) (p^{\ell})^{-s}
\]
and the fact that $\theta_1$ is multiplicative in $\Delta$.
Here, let us make a convention that summation over $d | \Delta$ implicitly means $d > 0$ (so that $d^s$ always makes sense) whence $\sigma_{-s}(\chi_D, \Delta) = \sigma_{-s}(\chi_D, -\Delta)$.
Also note that we have a functional equation
\begin{equation}\label{funcsigma}
|\Delta|^s \; \sigma_{-s}(\chi_D, \Delta)= \chi_D(|\Delta|) \sigma_s(\chi_D^{-1}, \Delta).
\end{equation} It is worth noting that the equation \eqref{funcsigma} holds for any $\chi$ as long as the modulus of $\chi$ and $\Delta$ are relatively prime.
On the RHS of \eqref{eq:theta_vs_rhs}, one has
\begin{align*}
& \sum_{(D_1,D_2) \in F_D}  |D_2|^{2k}\chi_{D_2}((-1)^{j-1})\sigma_{2k}\left(\chi_{D_1},\chi_{D_2};\D\right)\\
= & \sum_{(D_1, D_2) \in F_D}  |D_2|^{2k} \sigma_{2k}\left(\chi_{D_1},\chi_{D_2};(-1)^{j-1}\D\right) \\
= & \sum_{(D_1, D_2) \in F_D} |D_2|^{2k} \sum_{d | \Delta} \chi_{D_1}(d) \chi_{D_2}((-1)^{j-1}\Delta/d) d^{2k}\\
= & \sum_{(D_1, D_2) \in F_D} |D_2|^{2k} \sum_{d | \Delta} \chi_{D_1}(d) \frac{\chi_{D_2}((-1)^{j-1}\Delta)}{\chi_{D_2}(d)} d^{2k} \text{ thanks to } (\Delta, D_2) | (\Delta, D) = 1\\
= & \sum_{(D_1, D_2) \in F_D} |D_2|^{2k} \sum_{d | \Delta} \chi_{D_1}(d) \chi_{D_2}(d) \chi_{D_2}((-1)^{j-1}\Delta) d^{2k} \text{ since } \chi_{D_2}(d) \in \{\pm 1\}\\
= & \sum_{(D_1, D_2) \in F_D} \chi_{D_2}((-1)^{j-1}\Delta) |D_2|^{2k} \sum_{d | \Delta} \chi_D(d) d^{2k}\\
= & \left( \sum_{(D_1, D_2) \in F_D} \chi_{D_2}((-1)^{j-1}\Delta) |D_2|^{2k} \right) \sigma_{2k}(\chi_D; \Delta).
\end{align*}
However, recalling equation \eqref{theta1} and the functional equation \eqref{funcsigma}, we have 
\[
\Delta^{2k} \theta_1((-1)^{j-1} \Delta, 2k) = \chi_D(\Delta) \sigma_{2k}(\chi_D; \Delta),
\]
where $\Delta > 0$. Thus, it remains to show that
\begin{equation}
     \chi_D(\Delta) (-1)^{j-1}  |D|^{2k} \theta_0((-1)^j \Delta, 2k)
        = \sum_{(D_1, D_2) \in F_D} \chi_{D_2}((-1)^{j-1} \Delta) |D_2|^{2k}
\label{eq:theta0_vs_character_sum}
\end{equation}
to obtain \eqref{eq:theta_vs_rhs}.
To do that, we obtain a concrete description of the set $F_D$.

For an odd prime $p$, let
$$p^* := \left( \frac{-1}{p} \right) p = \begin{cases}
p &\text{if } p \equiv 1 \bmod 4,\\
-p &\text{if } p \equiv 3 \bmod 4,\\
\end{cases}$$
be the corresponding prime fundamental discriminant.
For any $D \in \FundDisc$, we set
$$2_D^* := \frac{D}{\prod_{\text{odd } p|D} p^*}.$$

Using basic properties of fundamental discriminants, one has the following
\begin{lemma}
\begin{enumerate}
\item Let $\tilde{D} = D/4$. One has
\begin{align*}
2_D^* &= \begin{cases}
        1 &\text{if } 2 \nmid D,\\
        -4 &\text{if } 2 | D, 8 \nmid D \text{ i.e. } \tilde{D} \equiv 3\ \text{or}\ 7(8),\\
        8 &\text{if } 8 | D, \frac{D}{8} \equiv 1 \bmod 4 \text{ i.e. } \tilde{D} \equiv 2(8),\\
        -8 &\text{if } 8 | D, \frac{D}{8} \equiv -1 \bmod 4 \text{ i.e. } \tilde{D} \equiv 6(8).\\
\end{cases}
\end{align*}

\item For any $(D_1, D_2) \in F_D$, one has
$$2^*_{D_2} = 2^*_D.$$

\item Let $\Sigma_D := \{\text{primes } p | D\}$ and $2^* := 2^*_D$. Then $F_D$ is in an one-to-one correspondence with the power set of $\Sigma_D$ where any subset $Q \subset \Sigma_D$ corresponds to the pair $((D_1(Q), D_2(Q)) \in F_D$ defined by
$$D_2(Q) := \prod_{q \in Q} q^* \qquad { and } \qquad D_1(Q) := \frac{D}{D_2(Q)} = \prod_{q \in \Sigma_D \backslash Q} q^*.$$
\end{enumerate}
\thlabel{lm:description_of_FD}
\end{lemma}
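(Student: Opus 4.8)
The plan is to derive all three statements from the unique factorization of fundamental discriminants into \emph{prime discriminants}. I would first record this tool: every fundamental discriminant $D$ is either $\equiv 1 \pmod 4$ and squarefree, or of the form $4m$ with $m$ squarefree and $m \equiv 2,3 \pmod 4$; and $D$ factors uniquely as a product of the odd prime discriminants $p^*$ (one per odd prime $p \mid D$) together with at most one even prime discriminant drawn from $\{-4, 8, -8\}$. The three parts are then essentially bookkeeping with this factorization, the recurring observation being that each odd prime discriminant satisfies $p^* \equiv 1 \pmod 4$ (since $p^* = \left(\frac{-1}{p}\right)p$ is $p$ or $-p$, both $\equiv 1 \bmod 4$).

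For part (1), I would set $D_{\mathrm{odd}} := \prod_{\text{odd } p \mid D} p^*$ and note that, being a product of factors $\equiv 1 \pmod 4$, it is the unique integer $\equiv 1 \pmod 4$ whose absolute value equals the odd part of $D$. Then $2_D^* = D / D_{\mathrm{odd}}$ reduces to a finite case check on $v_2(D)$ and the residue of $\tilde{D} = D/4$ modulo $8$: when $2 \nmid D$ one has $D = D_{\mathrm{odd}}$ and $2_D^* = 1$; when $v_2(D) = 2$ the fundamental-discriminant condition forces the odd part to be $\equiv 3 \pmod 4$, so $D_{\mathrm{odd}}$ picks up a sign and $2_D^* = -4$; and when $8 \mid D$ the residue $D/8 \equiv \pm 1 \pmod 4$ distinguishes $2_D^* = 8$ from $2_D^* = -8$. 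Translating the residues of the odd part into the stated congruences on $\tilde{D}$ modulo $8$ is immediate.

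For part (2), the engine is multiplicativity of $2_{(-)}^*$: since $D_1 D_2 = D$ with $\gcd(D_1,D_2) = 1$, the odd primes dividing $D$ split as a disjoint union over those dividing $D_1$ and those dividing $D_2$, whence $D_{\mathrm{odd}} = \prod_{\text{odd } p \mid D_1} p^* \cdot \prod_{\text{odd } p \mid D_2} p^*$ and therefore $2_D^* = 2_{D_1}^* \cdot 2_{D_2}^*$. Because $2$ divides at most one of the coprime factors, and $2_{D'}^* = 1$ exactly when $D'$ is odd, the even prime discriminant of $D$ is carried wholesale by whichever factor it divides; in particular $2_{D_2}^* = 2_D^*$ precisely when that factor is $D_2$. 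This is the one place I would be careful, since the identity as written presupposes the even part lands in $D_2$.

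For part (3), I would build the bijection directly from unique factorization. Given $Q \subseteq \Sigma_D$, set $D_2(Q) = \prod_{q \in Q} q^*$ with the convention $2^* := 2_D^*$, and $D_1(Q) = D/D_2(Q) = \prod_{q \in \Sigma_D \setminus Q} q^*$; the content is that each of these, being a product of prime discriminants over distinct primes with at most one even prime discriminant (necessarily $2_D^*$ by part (1)), is again a fundamental discriminant, and that $\gcd(D_1(Q), D_2(Q)) = 1$ because the prime supports are disjoint. Conversely, any $(D_1, D_2) \in F_D$ determines $Q := \{\text{primes dividing } D_2\}$, and uniqueness of the prime discriminant factorization forces $D_2 = D_2(Q)$, so the two maps are mutually inverse. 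The main obstacle, and the step I expect to demand the most care, is exactly the structural input isolated at the start: verifying that a product of prime discriminants over an arbitrary subset of the primes of $D$ genuinely satisfies the fundamental-discriminant congruence conditions, i.e. that the even prime discriminant cannot split between the two factors and that the recombined odd parts remain $\equiv 1 \pmod 4$.
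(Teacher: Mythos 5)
Your proof is correct, and since the paper states this lemma without proof (appealing only to ``basic properties of fundamental discriminants''), your argument via unique factorization into prime discriminants is precisely the intended justification: all three parts reduce to the facts that each odd prime discriminant satisfies $p^* \equiv 1 \pmod 4$, that the odd parts recombine multiplicatively across a coprime factorization, and that a product of prime discriminants over distinct primes (with at most one even one) is again a fundamental discriminant. Your caveat on part (2) is a genuine catch rather than a defect of your proof: as literally stated, $2^*_{D_2} = 2^*_D$ fails whenever $D$ is even and the even prime discriminant lands in $D_1$ --- for instance $D = -8$ with $(D_1, D_2) = (-8, 1)$ gives $2^*_{D_2} = 1 \neq -8 = 2^*_D$, and such pairs do lie in $F_D$ since $F_D$ is symmetric in $(D_1, D_2)$. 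The multiplicativity $2^*_D = 2^*_{D_1}\, 2^*_{D_2}$ that you establish yields the correct version, namely $2^*_{D_2} = 2^*_D$ whenever $2 \mid D_2$ (both sides being $1$ when $D$ is odd), and that corrected statement is exactly what part (3) and the paper's subsequent factorization of $\sum_{(D_1,D_2) \in F_D} \chi_{D_2}((-1)^{j-1}\Delta)\,|D_2|^{2k}$ into the product $\prod_{p \mid D}\bigl(1 + \chi_{p^*}((-1)^{j-1}\Delta)\,|p^*|^{2k}\bigr)$ actually use, so the main result is unaffected.
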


We also have the following result concerning quadratic characters
\begin{lemma}
For any $D \in \FundDisc$, one has
$$\chi_D = \prod_{\text{prime } p | D} \chi_{p^*},$$
where $2^* = 2^*_D$.
\thlabel{lm:quadratic_char_factorization}
\end{lemma}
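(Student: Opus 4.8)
The plan is to reduce the asserted identity of Dirichlet characters to two ingredients: an integer factorization of $D$ into pairwise coprime prime fundamental discriminants, and the multiplicativity of quadratic characters over such a factorization. Recall that $\chi_D = \left(\tfrac{D}{\cdot}\right)$ is the Kronecker symbol attached to $D$, that for odd $p\mid D$ the character $\chi_{p^*}$ is $\left(\tfrac{p^*}{\cdot}\right)$, and that the factor indexed by $p=2$ in the product is $\chi_{2^*}$ with $2^* = 2^*_D$. Thus it suffices to establish, as an equation of integers,
\[
D = 2^*_D \prod_{\text{odd } p\mid D} p^{*},
\]
exhibiting $D$ as a product of pairwise coprime fundamental discriminants, and then to invoke the fact that for coprime fundamental discriminants $D = D_1 D_2$ one has $\chi_D = \chi_{D_1}\chi_{D_2}$ (equivalently, multiplicativity of the Kronecker symbol in its top argument along coprime factors).

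First I would justify the integer factorization. The identity above is nothing but the defining relation $2^*_D := D/\prod_{\text{odd }p\mid D} p^*$ rearranged, so the only content is that every factor on the right is a genuine prime fundamental discriminant and that the factors are pairwise coprime. The $p^*$ for odd $p\mid D$ are, by definition of $p^* = \left(\tfrac{-1}{p}\right)p$, prime fundamental discriminants supported at distinct odd primes, hence pairwise coprime. For the $2$-part I would appeal directly to \thref{lm:description_of_FD}(1), which shows $2^*_D \in \{1, -4, 8, -8\}$: when $D$ is odd this factor equals $1$ and is simply absent (with $\chi_{2^*_D}$ the trivial character), and otherwise $2^*_D$ is one of $-4, 8, -8$, each a prime fundamental discriminant supported at $2$ and therefore coprime to all the odd $p^*$.

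With the factorization in hand I would apply multiplicativity of the associated characters over the coprime pieces, obtaining
\[
\chi_D = \chi_{2^*_D}\prod_{\text{odd } p\mid D}\chi_{p^*} = \prod_{\text{prime } p\mid D}\chi_{p^*},
\]
which is exactly the claim. The main obstacle is the careful bookkeeping at the prime $2$: one must confirm through \thref{lm:description_of_FD}(1) that the \emph{quotient} $2^*_D$ is a bona fide prime fundamental discriminant (or $1$) rather than merely an integer, so that $\chi_{2^*_D}$ is a genuine primitive quadratic character with the correct conductor, and one must invoke character multiplicativity only in the coprime regime, where the sign and mod-$8$ ambiguities that plague the Kronecker symbol in general do not intervene. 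Once the factors are verified to be pairwise coprime fundamental discriminants, the odd part of the argument is entirely routine.
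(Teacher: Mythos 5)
Your proof is correct; note that the paper states this lemma without proof (introducing it only with ``we also have the following result concerning quadratic characters''), and your argument --- rearranging the definition of $2^*_D$ into the integer factorization $D = 2^*_D \prod_{\text{odd } p \mid D} p^*$ into pairwise coprime prime fundamental discriminants via \thref{lm:description_of_FD}(1), then applying multiplicativity of the Kronecker symbol in the top argument --- is precisely the standard justification the authors implicitly rely on. One small remark: complete multiplicativity of the Kronecker symbol in its top argument, $\left(\tfrac{ab}{n}\right) = \left(\tfrac{a}{n}\right)\left(\tfrac{b}{n}\right)$, holds unconditionally, so the coprimality bookkeeping you emphasize is needed only to guarantee that each $\chi_{p^*}$ is primitive with the expected conductor (and that the product character has conductor $|D|$), not for the identity of characters itself.
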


Using \thref{lm:description_of_FD} and \thref{lm:quadratic_char_factorization}, we obtain the factorization
\begin{align*}
\sum_{(D_1, D_2) \in F_D} \chi_{D_2}((-1)^{j-1} \Delta) |D_2|^{2k} &= \prod_{p|D} (1 + \chi_{p^*}((-1)^{j-1}\Delta) |p^*|^{2k})
\end{align*}
for the right hand side of \eqref{eq:theta0_vs_character_sum}.
To match with the left hand side, we observe the following

\newcommand{\sgn}{\mathsf{sgn}}
\begin{lemma}
Suppose that $\Delta$ is relatively prime to $2D$ (not necessarily positive). Then one has
\[
   R_p(\Delta, p^{-1-s}) = 1 + \sgn(p^*) \chi_{p^*}(\Delta) |p^*|^{-s}
\]
for every prime divisor $p | D$.
Here, $\sgn$ is the sign character i.e. $\sgn(x) = 1$ if $x > 0$ and $-1$ otherwise and could also be characterized alternatively in term of Kronecker symbol $\sgn(x) = \left( \frac{x}{-1} \right)$.
\end{lemma}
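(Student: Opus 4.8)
The plan is to exploit the hypothesis $\gcd(\Delta, 2D) = 1$ to collapse the piecewise definition \eqref{eq:defn_Rp} of $R_p$ to a single term. Since $p \mid D$ and $\Delta$ is coprime to $2D$, in particular $p \nmid \Delta$, so the exponent $t = v_p(\Delta)$ appearing in \eqref{eq:defn_Rp} is $0$. Setting $t = 0$ gives $\Delta_0 = \Delta$ and turns every occurrence of $|D_0|^t$, $2^t$, $D_2^t$ into $1$, so that each branch of $R_p(\Delta, p^{-1-s})$ reduces to $1$ plus a single monomial in $p^{-s}$ (note $pX = p^{-s}$ after substituting $X = p^{-1-s}$). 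I would then verify the claimed identity branch by branch, matching the surviving monomial against $\sgn(p^*)\chi_{p^*}(\Delta)|p^*|^{-s}$.

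For an odd prime $p \mid D$, the relevant branch is the second one, which with $t = 0$ becomes $R_p(\Delta, p^{-1-s}) = 1 + \left(\frac{-\Delta}{p}\right)p^{-s}$. Here $|p^*| = p$, so the exponent already matches $|p^*|^{-s} = p^{-s}$. To identify the coefficient I would use the defining property of the prime fundamental discriminant, namely $\chi_{p^*}(\Delta) = \left(\frac{\Delta}{p}\right)$ for $\Delta$ coprime to $p$ (a repackaging of quadratic reciprocity), together with $\sgn(p^*) = \left(\frac{-1}{p}\right)$, which holds since $p^* = \left(\frac{-1}{p}\right)p$ and $p > 0$. Then $\sgn(p^*)\chi_{p^*}(\Delta) = \left(\frac{-1}{p}\right)\left(\frac{\Delta}{p}\right) = \left(\frac{-\Delta}{p}\right)$, giving the claim.

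For $p = 2$ (so $4 \mid D$, automatic for a fundamental discriminant divisible by $2$), the relevant branch is selected by the residue of $D_1 = D/4$ modulo $8$, and by part (1) of \thref{lm:description_of_FD} these three classes are exactly those producing $2_D^* = 8,\ -8,\ -4$ respectively. Taking $t = 0$ and substituting $2X = 2^{-s}$, the three branches read $1 + \left(\frac{8}{\Delta}\right)2^{-3s}$, $1 - \left(\frac{-8}{\Delta}\right)2^{-3s}$, and $1 - \left(\frac{-4}{\Delta}\right)2^{-2s}$. Against the target $1 + \sgn(2_D^*)\chi_{2_D^*}(\Delta)|2_D^*|^{-s}$ I would check each: for $2_D^* = 8$ we have $\sgn = +1$, $|8|^{-s} = 2^{-3s}$, $\chi_8(\Delta) = \left(\frac{8}{\Delta}\right)$; for $2_D^* = -8$ we have $\sgn = -1$, $|-8|^{-s} = 2^{-3s}$, $\chi_{-8}(\Delta) = \left(\frac{-8}{\Delta}\right)$; for $2_D^* = -4$ we have $\sgn = -1$, $|-4|^{-s} = 2^{-2s}$, $\chi_{-4}(\Delta) = \left(\frac{-4}{\Delta}\right)$. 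In each case the exponent of $2^{-s}$ and the overall sign agree with the corresponding branch, so the identity holds.

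The only genuine difficulty — and the step I expect to be the main obstacle — is keeping the Kronecker-symbol conventions straight: one must confirm that the symbol $\left(\frac{\,\cdot\,}{\Delta_0 D_2^t}\right)$ of \eqref{eq:defn_Rp} at $t = 0$ is exactly the Kronecker symbol defining $\chi_{2_D^*}$ evaluated at $\Delta$, i.e. $\chi_d(\Delta) = \left(\frac{d}{\Delta}\right)$ for $d \in \{8, -8, -4\}$, and that for odd $p$ the reciprocity step $\chi_{p^*}(\Delta) = \left(\frac{\Delta}{p}\right)$ is invoked correctly (legitimate because $\gcd(\Delta, 2D) = 1$ forces $\Delta$ coprime to $p$). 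Once these conventions are pinned down, what remains is a finite, routine case-by-case matching.
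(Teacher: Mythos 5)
Your proposal is correct and follows essentially the same route as the paper's proof: set $t = v_p(\Delta) = 0$ using $\gcd(\Delta, 2D) = 1$, match the odd-prime branch via the quadratic-reciprocity identity $\sgn(p^*)\chi_{p^*}(\Delta) = \left(\frac{-\Delta}{p}\right)$, and check the three $p = 2$ branches against $2^*_D \in \{8, -8, -4\}$ directly. The only cosmetic difference is that the paper proves the odd-prime identity by explicit case analysis on $p \bmod 4$, whereas you invoke $\chi_{p^*}(\Delta) = \left(\frac{\Delta}{p}\right)$ as a standard repackaging of reciprocity --- which is the same content.
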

\begin{proof}
Observe that for odd $\Delta$, we always have
\[
   \left(\frac{-\D}{p}\right) = \sgn(p^*) \chi_{p^*}(\Delta).
\]
Indeed, if $p \equiv 1 \bmod 4$ then
\[
   \left(\frac{-\D}{p}\right)
= \left(\frac{\D}{p}\right)
= \left(\frac{p}{\D}\right)
= \sgn(p^*) \chi_p(\D)
\]
by quadratic reciprocity, and  if $p \equiv 3 \bmod 4$ we have
\[
   \left(\frac{-\D}{p}\right)
= -\left(\frac{\D}{p}\right)
= -\left(\frac{p}{\D}\right) (-1)^{\frac{\D-1}{2}}
= -\left(\frac{-p}{\D}\right) \underbrace{\left(\frac{-1}{\D}\right) (-1)^{\frac{\D-1}{2}}}_{1 \text{ by supplementary rule}}
= -\chi_{-p}(\D).
\]
By assumption $(\Delta, D) = 1$, we have $\Delta_0 = \Delta$ and $t = v_p(\D) = 0$ for all $p | D$ in \eqref{eq:defn_Rp}.
It follows that, for odd $p$, 
\[
   R_p(\Delta, p^{-1-s})
= \left(1+\left(\frac{-\D}{p}\right)(p^{-s})\right)
= 1 + \sgn(p^*) \chi_{p^*}(\Delta) |p^*|^{-s}
\]
by definition. For $p = 2$, one likewise has
\begin{align*}
	R_2(\D, 2^{-1-s}) &= \begin{cases}
		1+\left(\frac{8}{\D}\right) (2^{-s})^{3}&\text{for } D_{1}\equiv 2(8),\\
		1-\left(\frac{-8}{\D}\right) (2^{-s})^{3}&\text{for } D_{1}\equiv 6(8),\\
		1-\left(\frac{-4}{\D}\right) (2^{-s})^{2}&\text{for } D_{1}\equiv 3\ \text{or}\ 7(8),\\
	\end{cases}\\
	&= \begin{cases}
		1+\chi_{2^*}(\D) (8^{-s}) &\text{for } D_{1}\equiv 2(8) \Rightarrow 2^* = 8,\\
		1 - \chi_{2^*}(\D) (8^{-s}) &\text{for } D_{1}\equiv 6(8)  \Rightarrow 2^* = -8,\\
		1 - \chi_{2^*}(\D) (4^{-s}) &\text{for } D_{1}\equiv 3\ \text{or}\ 7(8)  \Rightarrow 2^* = -4,\\
	\end{cases}\\
	&= 1 + \sgn(2^*) \chi_{2^*}(\D) |2^*|^{-s}.
\end{align*}
This establishes the lemma.
\end{proof}

By the lemma above and $\chi_D(-1) = -1$, we can now write
\begin{align*}
   &(-1)^{j-1} \, \chi_D(\Delta) \, |D|^{2k} \, \theta_0((-1)^j\Delta, 2k)\\
= &\chi_D((-1)^{j-1} \Delta) |D|^{2k} \prod_{p|D} (1 + \sgn(p^*) \chi_{p^*}((-1)^j \D) |p^*|^{-2k})\\
= &\prod_{p|D} \chi_{p^*}((-1)^{j-1} \Delta) |p^*|^{2k} (1 + \chi_{p^*}(-1) \chi_{p^*}((-1)^j \D) |p^*|^{-2k}) \quad \text{ for } \chi_{p^*}(-1) = \sgn(p^*)\\
= &\prod_{p|D} (\chi_{p^*}((-1)^{j-1} \Delta) |p^*|^{2k} + 1).
\end{align*}

We have thus completed the proof of \eqref{eq:theta_vs_rhs} for all $(\Delta, 2D) = 1$; hence the proof of \eqref{eq:NormalizedUenoAsEisenstein} assuming Ueno's result about $f_{k, D}^{j}$ being a modular form.

Finally, the identity (\ref{g_kD}) follows from the equation (\ref{frickef&g}) and the Fricke involution of $f_{k,D}^j$ as follows. The equation (\ref{frickef&g}) implies that 
\begin{align*}
g_{k,D}^j(\tau)
&=|D|^{-\frac{2k+1}{2}}\left(|D|^{-\frac{2k+1}{2}}\tau^{-(2k+1)}f\left(\frac{-1}{|D|\tau}\right)\right)\\
&=|D|^{-\frac{2k+1}{2}}\left(f_{k,D}^j\bigg\rvert_{2k+1}W_{|D|}\right)(\tau)\\
&=C_{k,D} \sum_{(D_1,D_2) \in F_D}  |D_2|^{2k} \chi_{D_2}((-1)^{j})|D_1|^{-(2k+1)} E_{2k+1}\left(\chi_{D_2},\chi_{D_1}\right)(\tau),
\end{align*}
which shows that $g_{k,D}^j\in \mathcal{E}_{2k+1}(\Gamma_0(|D|),\chi_D).$

\vskip 0.2in

\noindent\emph{Acknowledgments.} The authors would like the referee for a careful reading of the paper and for helpful comments. 

\bibliographystyle{alpha}
\bibliography{Zeta}

\end{document}